\newtheorem{theorem}{Theorem}
\newtheorem{lemma}[theorem]{Lemma}
\newtheorem*{lemma*}{Lemma}
\theoremstyle{definition}
\newtheorem*{remark}{Remark}
\newenvironment{config}{\vspace*{0.5ex}\tt\setlength{\parskip}{1pt}}{}
\newcolumntype{P}[1]{>{\centering\arraybackslash}p{#1}}
\newcolumntype{M}[1]{>{\centering\arraybackslash}m{#1}}
\date{}
\DeclareMathOperator{\STS}{STS}
\newcommand{\Sw}[1][]{\operatorname{S}_{#1}}
\newcommand{\DSw}[1][]{\overrightarrow{\operatorname{S}_{#1}}}
\begin{document}
	\title{Cycle switching in Steiner triple systems of order 19}
	\author{Grahame Erskine and Terry S. Griggs\\
		School of Mathematics and Statistics\\
		The Open University\\
		Walton Hall\\
		Milton Keynes MK7 6AA\\
		UNITED KINGDOM\\
		\texttt{grahame.erskine@open.ac.uk}\\
		\texttt{terry.griggs@open.ac.uk}}
	\maketitle
	
	\begin{abstract}\noindent
		Cycle switching is a particular form of transformation applied to isomorphism classes of a Steiner triple system of a given order $v$ (an $\STS(v)$), yielding another $\STS(v)$. This relationship may be represented by an undirected graph. An $\STS(v)$ admits cycles of lengths $4,6,\ldots,v-7$ and $v-3$. In the particular case of $v=19$, it is known that the full switching graph, allowing switching of cycles of any length, is connected. We show that if we restrict switching to only one of the possible cycle lengths, in all cases the switching graph is disconnected (even if we ignore those $\STS(19)$s which have no cycle of the given length). Moreover, in a number of cases we find intriguing connected components in the switching graphs which exhibit unexpected symmetries. Our method utilises an algorithm for determining connected components in a very large implicitly defined graph which is more efficient than previous approaches, avoiding the necessity of computing canonical labellings for a large proportion of the systems.
	\end{abstract}
	
	\noindent\textbf{MSC2020 classification:} 05B07.\\
	\noindent\textbf{Keywords:} Steiner triple system; cycle switching; connected graph.
	
	\section{Introduction}\label{sec:intro}
	A \emph{Steiner triple system} of \emph{order} $v$, denoted by $\STS(v)$, is an ordered pair $S=(V,\mathcal{B})$ where $V$ is a set of \emph{points} of cardinality $v$, and $\mathcal{B}$ is a collection of \emph{triples}, also called \emph{blocks}, which collectively have the property that every pair of distinct points of $V$ is contained in precisely one triple. Such systems exist if and only if $v \equiv 1$ or $3 \pmod 6$; first proved by Kirkman in 1847~\cite{KIRK}. In this paper, we take the point set $V$ to be $\{0,1,2,\ldots,v-1\}$; two such Steiner triple systems $(V,\mathcal{B})$ and $(V,\mathcal{D})$ are said to be \emph{isomorphic} if there exists a bijection $\phi$ on their point set such that every triple $B \in \mathcal{B}$ maps to a triple $\phi(B) \in \mathcal{D}$. Up to isomorphism, the numbers of $\STS(v)s$ of orders 7, 9, 13 and 15 are 1, 1, 2 and 80 respectively. With the advent of modern computing techniques, Kaski and \"{O}sterg\r{a}rd~\cite{KO} determined the number of non-isomorphic $\STS(19)$s; there are 11,084,874,829 which are now stored in compact notation on the Internet~\cite{STS19}. They have been extensively analysed with the results appearing in~\cite{EIGHT}.
	
	A \emph{switch} is an operation on an $\STS(v)$ whereby a set of blocks is removed from the system and replaced by a new set of blocks of the same cardinality: provided the removed and replacement block sets cover precisely the same set of pairs of points, the resulting structure will be a new $\STS(v)$. To avoid redundancy, we usually insist that the removed and replacement block sets be disjoint. The two sets of blocks in a switch operation are known as \emph{tradeable configurations}; together they constitute a \emph{trade}. The smallest possible tradeable configuration in a Steiner triple system is a set of four blocks containing six points in an arrangement known as a \emph{Pasch} configuration or 4-\emph{cycle}. Small tradeable configurations have been classified in~\cite{FGG2}.
	
	In general, finding particular tradeable configurations in a given $\STS(v)$ may be an onerous computational task. Most research into switching of Steiner triple systems has concentrated on a particular class of tradeable configurations known as \emph{cycles} which we describe now, following the notation of~\cite{KMO}. 
	
	Let $a,b$ be distinct points in an $\STS(v)$, and let $c$ be the unique point which shares a block with $a$ and $b$. The \emph{cycle graph} $C_{ab}$ is defined as follows. Let $X_{ab}$ be the set $V\setminus\{a,b,c\}$ and let $\mathcal{B}_a$ be the set of blocks in $\mathcal{B}$ containing $a$, apart from the block $\{a,b,c\}$; and similarly for $\mathcal{B}_b$. We define the graph $C_{ab}$ on the vertex set $X_{ab}$ to have edges $\{B\setminus\{a\}:B\in \mathcal{B}_a\}$ and $\{B\setminus\{b\}:B\in \mathcal{B}_b\}$. It is easy to see that each vertex in $C_{ab}$ has valency 2, and so $C_{ab}$ is a 2-regular graph of order $v-3$. Further, each cycle in $C_{ab}$ has even length at least 4.
	
	Now let $\{x_1,x_2,\ldots,x_{2k}\}$ be the vertices of some cycle of length $2k$ in $C_{ab}$. There are $2k$ blocks of the $\STS$ of the form:
	
	$\quad \{a,x_1,x_2\}, \{a,x_3,x_4\},\ldots,\{a,x_{2k-1},x_{2k}\} \in \mathcal{B}_a$; and
	
	$\quad \{b,x_1,x_{2k}\}, \{b,x_2,x_3\},\ldots,\{b,x_{2k-2},x_{2k-1}\} \in \mathcal{B}_b$.
	
	The \emph{cycle switch} relative to the pair $\{a,b\}$ and the set $\{x_1,x_2,\ldots,x_{2k}\}$ is formed by exchanging the roles of $a$ and $b$ in these blocks. Note that the cycle switch is uniquely determined by the pair $\{a,b\}$ and any one of the points $\{x_1,x_2,\ldots,x_{2k}\}$. For a given pair $\{a,b\}$, the cycle graph $C_{ab}$ may consist of a single Hamiltonian cycle of length $v-3$, or multiple disjoint cycles of length totalling $v-3$. In the case where $C_{ab}$ consists of exactly two cycles of length $2k$ and $2\ell$ where $2k+2\ell=v-3$, we call these two cycles a \emph{complementary pair}. The significance of Hamiltonian cycles and complementary pairs is shown by the following lemma, proved in~\cite{GGM}.
	
	\begin{lemma}\label{lem:compl}
		~
		
		\begin{enumerate}[label=(\roman*),topsep=0pt]
			\item Let $C$ be a Hamiltonian cycle in some $\STS(v)$. Then switching $C$ results in a new $\STS(v)$ of the same isomorphism class as the original.
			\item Let $C_1$ and $C_2$ be complementary cycles in some $\STS(v)$. Then switching $C_1$ results in a new $\STS(v)$ of the same isomorphism class as switching $C_2$.
		\end{enumerate}
	\end{lemma}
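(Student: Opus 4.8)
The plan is to compare the two block operations in play — switching a cycle, and relabelling via the transposition $(a\,b)$ — directly at the level of individual blocks, exploiting the fact that both of them disturb only blocks that contain $a$ or $b$.

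First I would fix notation: write $\sigma$ for the permutation of $V$ that interchanges $a$ and $b$ and fixes every other point. The blocks of $S$ that meet $\{a,b\}$ are precisely $\{a,b,c\}$ together with the blocks of $\mathcal{B}_a$ and $\mathcal{B}_b$; every remaining block — in particular every block through $c$ other than $\{a,b,c\}$ — avoids both $a$ and $b$. Hence $\sigma(S)$ differs from $S$ only on $\mathcal{B}_a\cup\mathcal{B}_b$, where it turns each block $\{a,x,x'\}$ into $\{b,x,x'\}$ and each $\{b,x,x''\}$ into $\{a,x,x''\}$, while $\{a,b,c\}$, viewed as a set, is fixed. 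A cycle switch performs exactly the same $a\leftrightarrow b$ exchange, but only on the $2k$ blocks indexed by the edges of the chosen cycle.

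The key step is then the following observation. For a union $T$ of cycles of $C_{ab}$, let $\mathrm{sw}_T(S)$ denote the system obtained by switching every cycle of $T$ simultaneously. Since a switch only swaps $a$ and $b$ inside blocks while leaving the underlying pairs of points intact, the cycle graph $C_{ab}$ is unchanged by any switch; in particular the complementary family $T^{c}$ of cycles still makes sense in $\mathrm{sw}_T(S)$, and one checks blockwise that $\sigma(\mathrm{sw}_T(S)) = \mathrm{sw}_{T^{c}}(S)$: on the blocks of $T$ the two $a\leftrightarrow b$ swaps cancel, on the blocks of $T^{c}$ exactly one swap survives, and all other blocks are fixed by both operations. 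Part (i) is the case where $T$ is the single Hamiltonian cycle $C$, so $T^{c}$ is empty and $\mathrm{sw}_{T^{c}}(S)=S$, whence $\mathrm{sw}_C(S) = \sigma^{-1}(S) = \sigma(S)$, which is isomorphic to $S$. Part (ii) is the case $T = \{C_1\}$ with $T^{c} = \{C_2\}$ — which is exactly what it means for $C_1,C_2$ to be a complementary pair — giving $\sigma(\mathrm{sw}_{C_1}(S)) = \mathrm{sw}_{C_2}(S)$, so switching $C_1$ and switching $C_2$ yield isomorphic $\STS(v)$s.

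I expect the only real care needed is the bookkeeping: confirming that $\{a,b,c\}$ is fixed setwise by $\sigma$ and untouched by any switch, that the blocks through $c$ contribute nothing because they avoid $a$ and $b$, and that applying $\sigma$ to a switched system does not inadvertently change the pairs covered (so that the resulting structure is genuinely an $\STS(v)$). With those points pinned down no computation remains — the argument reduces to the single remark that a switch is ``$\sigma$ applied to one cycle's worth of blocks'' while $\sigma$ itself is ``$\sigma$ applied to all of them.''
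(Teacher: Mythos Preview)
Your argument is correct and is the standard one: the transposition $\sigma=(a\,b)$ furnishes the explicit isomorphism in both parts, because switching a set $T$ of cycles and then applying $\sigma$ is the same as switching the complementary set $T^{c}$. The paper itself does not prove this lemma but simply cites \cite{GGM}, where the same transposition argument is used; so there is nothing to compare beyond noting that your write-up matches the expected approach.
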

	
	\begin{remark}
		As noted above, the smallest possible cycle length is 4, commonly referred to as a Pasch configuration. The blocks of a Pasch configuration have the form $\{a,b,c\}$, $\{a,d,e\}$, $\{b,e,f\}$ and $\{c,d,f\}$. This configuration may be viewed as the blocks arising from a 4-cycle in the cycle graph $C_{af}$, but also in the graphs $C_{bd}$ and $C_{ce}$. Thus every 4-cycle in an $\STS(v)$ arises from the cycle graphs of exactly three pairs of points. This situation does not occur for any other cycle lengths.
	\end{remark}
	
	Our principal interest in this paper is in equivalence classes of Steiner triple systems under cycle switching. We seek to determine, for a given $\STS(v)$, which isomorphism classes can be reached from the given system by a finite sequence of cycle switches of a given type. It is immediate that this defines an equivalence relation on the set of isomorphism classes of $\STS(v)$s, and so partitions the set. Alternatively, we may regard the set of $\STS(v)$s as the vertices of a graph, with a (directed) edge from $(V,\mathcal{B})$ to $(V,\mathcal{B}')$ whenever the isomorphism class of the latter system can be reached from the former by a cycle switch. Then the equivalence classes correspond to connected components in this graph; we shall use these two views of cycle switching equivalence interchangeably in what follows.
	
	More formally, the graph defined above is a multidigraph, with one directed edge emanating from a vertex for every cycle of the appropriate type contained in the corresponding $\STS(v)$. It is of course possible for a cycle switch to reach the same isomorphism class as the original system, and for more than one cycle to reach the same isomorphism class. Thus, loops and multiple directed edges are possible in the multidigraph. Of course, if there is a directed edge from $(V,\mathcal{B})$ to $(V,\mathcal{B}')$, then there must also be a directed edge in the opposite direction by considering the reverse switch. As our principal interest here is in connectivity, we may suppress the multiple edges and loops, and consider a pair of opposite directed edges as a single undirected edge, yielding a simple undirected graph. Allowing switching of any possible cycle length, we will denote the multidigraph of $\STS(v)$s by $\DSw(v)$, and the corresponding undirected graph by $\Sw(v)$. The above discussion shows that the connected components of $\Sw(v)$ and $\DSw(v)$ are the same, so we will work with whichever is most convenient. 
	
	Typically though, we will wish to restrict the allowable cycle switches to those of a particular length, or set of lengths. The cycle switching graph of $\STS(v)$s allowing only switching of cycles of length $\ell$ will be denoted by $\Sw[\ell](v)$, and similarly for the directed version. If we allow a restricted set of lengths $C\subset\{4,6,\ldots,v-7,v-3\}$ then the switching graph will be denoted by $\Sw[C](v)$.
	
	In~\cite{GGM}, the switching graphs for all $v\leq 15$ and all allowable cycle lengths were determined. Trivially, the graphs $\Sw[4](7)$ and $\Sw[6](9)$ both consist of a single vertex (with a suppressed loop). The graphs $\Sw[4](13)$ and $\Sw[6](13)$ both consist of two vertices representing the cyclic and non-cyclic systems joined by an edge. A suppressed loop on the vertex representing the non-cyclic system indicates that this system can also be switched to itself. This is not the case for the cyclic system. The graph $\Sw[10](13)$ consists of two isolated vertices (again, each with a suppressed loop).
	
	Referencing the standard numbering of the 80 $\STS(15)$s as given for example in~\cite{MPR}, the graph $\Sw[4](15)$ consists of one connected component consisting of 79 vertices, and an isolated vertex corresponding to the unique anti-Pasch system of this order (\#80). The graph $\Sw[6](15)$ consists of two connected components, one with 57 vertices and the other with 7 vertices (systems \#11, \#12, \#19, \#20, \#21, \#22 and \#61), as well as 16 isolated vertices corresponding to systems with no 6-cycle. The seven systems above all contain an $\STS(7)$ subsystem. The graph $\Sw[8](15)$ consists of one connected component containing 77 vertices, and three isolated vertices corresponding to systems \#1, \#16 and \#80 which have no 8-cycle. The graph $\Sw[12](15)$ consists of 80 isolated vertices, 78 of which have a suppressed loop; the other two correspond to systems \#1 and \#2 which have no 12-cycle.
	
	In~\cite{KMO}, it was shown that the switching graph $\Sw(19)$ of $\STS(19)$s is connected. In Section~\ref{sec:res}, we extend this result by showing that $\Sw[\ell](19)$ is disconnected for all $\ell\in\{4,6,8,10,12,16\}$, and determining all the connected components of these graphs. To our mind the results are a little surprising. particularly for 4-cycle switching. It was already known that the graph $\Sw[4](19)$ is disconnected and that there exist components consisting of just two vertices~\cite{GGM2}. Our expectation was that there would be one component containing nearly all systems and a number of other very small components each containing possibly only four or fewer systems. So it was interesting to find that there also exists a component containing 16 systems with an automorphism group of order 256, yet all systems have only the identity automorphism! The graph $\Sw[6](19)$ also contains a component of order 24. We also show that $\Sw[\{4,6\}](19)$ is connected, so the minimum set of cycle lengths required to ensure the connectivity of the switching graph has cardinality 2. The analysis done in this paper also posed computational challenges due to the large number of isomorphism classes of $\STS(19)$s and our algorithm to deal with these is described in Section~\ref{sec:alg} 
	
	When providing listings of particular systems, we will often employ the following compact notation which is common in the literature, see for example~\cite{FGG1}. The set $V$ of points is taken to be $\{0,1,2,\ldots,18\}$. The points $10,\ldots,18$ are represented by the letters $a,\ldots,i$ respectively. The 57 blocks of an $\STS(19)$ are represented by a string of symbols $s_1 s_2 \ldots s_{57}$. Using the usual lexicographical order, the symbol $s_i$ is the largest element $z_i$ in the $i$th triple $\{x_i,y_i,z_i\}$, where $x_i<y_i<z_i$. The remaining two elements implicitly have the property that there is no pair $x'_i<y'_i$ such that $\{x'_i,y'_i\}$ does not appear in an earlier triple, and either (i) $x'_i<x_i$ or (ii) $x'_i=x_i$ and $y'_i<y_i$.
	
	\section{Algorithm}\label{sec:alg}
	The principal practical problem to be considered when determining connected components of switching graphs of $\STS(19)$s is the large number of systems to be considered. There are 11,084,874,829 $\STS(19)$s~\cite{KO}, and so creating a full graph of this order in computer memory is entirely impractical. A further efficiency issue arises due to the fact that the vertices in the graph are \emph{isomorphism classes} of $\STS(19)$s. To compute the isomorphism class of a given system, we use the \texttt{nauty} package~\cite{nauty} to produce a canonical labelling of the point-block incidence graph. If carried out for every $\STS(19)$, this procedure would represent by far the most computationally expensive part of the process. We now describe an algorithm which avoids the necessity of canonically labelling the majority of the systems.
	
	Let $c$ be an allowable cycle length in an $\STS(19)$. Let $u,v$ be $\STS(19)$s, considered as vertices in the switching graph $\DSw[c](19)$. We write $u\to v$ if there is a directed edge from $u$ to $v$ in the graph; that is to say, there is some $c$-cycle in $u$ which reaches the isomorphism class of $v$ when switched. Let $N_c(u)$ denote the number of $c$-cycles in the vertex $u$. We define a strict partial order $<$ on the vertices of $\DSw[c](19)$ as follows:
	\[
	u<v\text{ iff } u\to v \text{ and } N_c(u)<N_c(v).
	\]
	
	In other words, $u<v$ if $v$ is reached by a $c$-cycle switch from $u$, \emph{and} $v$ contains strictly more $c$-cycles than $u$. Note that by definition, every connected component in $\DSw[c](19)$, and hence also in $\Sw[c](19)$, contains a vertex which is a maximal element in this preorder. Thus to compute the connected components of the graph, it suffices to consider only these maximal elements.
	
	\textbf{Step 1.} For each $\STS(19)$ $u$ we find all the $c$-cycles in the system, and for each cycle we find the system $v$ reached by the switch. If $N_c(v)>N_c(u)$ then we discard $u$.
	
	\textbf{Step 2.} For each system remaining after Step 1, we extend the logic by discarding a system $u$ if there is some system $v$ at distance at most $k$ from $u$ in $\DSw[c](v)$ with $N_c(v)>N_c(u)$. This is done by recursively exploring the switched systems starting from $u$, stopping to discard $u$ whenever a system with more $c$-cycles is encountered. This becomes an expensive operation, but typically a value of $k=4$ is sufficient to reduce the remaining population of vertices to be considered to manageable levels.
	
	\textbf{Step 3.} The remaining systems after Step 2 will often comprise a number of small connected components in the switching graph. To detect this, we select a vertex $u$ in the set and recursively explore the switching graph until a limit $N$ is reached, or no new vertices are detected. Typically, a value of $N$ around 100 is sufficient to find most small components.
	
	\textbf{Step 4.} Systems remaining after Step 3 will fall into one or more larger connected components. Let $V=\{v_1,v_2,\ldots,v_m\}$ be the set of remaining vertices at this stage. By construction, every vertex of $V$ is connected to at least one vertex of $\Sw[c](19)$. Now choose for each $i=1,\ldots,m$ a positive integer $k_i$ and construct the closed $k_i$-neighbourhood $\overline{N}_{k_i}(v_i)$, i.e. the set of vertices in $\Sw[c](19)$ at distance no more than $k_i$ from $v_i$.  Then define a graph $W$ with vertex set $V$ and an edge from $v_i$ to $v_j$ if $\overline{N}_{k_i}(v_i)\cap\overline{N}_{k_j}(v_j)\neq\emptyset$. Then if $W$ is connected, it follows that $\Sw[c](19)$ is connected.
	
	\textbf{Step 5.} If the graph $W$ from Step 4 is not connected, then each connected component of $W$ is a subset of some connected component of $\Sw[c](19)$, but two connected components of $W$ may in fact be in the same connected component of $\Sw[c](19)$. To deal with this possibility, we select a representative vertex from each connected component of $W$ and repeat Step 3 with a much larger value of $N$ to detect any large connected components. In practice, a value of $N=1,000,000$ proved sufficient to detect all such components.
	
	\section{Results}\label{sec:res}
	Before describing our detailed results, we begin with a small lemma which relates the connected components of $\Sw[10](19)$ to those of $\Sw[6](19)$, and the components of $\Sw[12](19)$ to those of $\Sw[4](19)$. 
	
	\begin{lemma}\label{lem:subg}
		Let $v\geq 13$ and let $u$ be a vertex in $\Sw(v)$ representing a given $\STS(v)$. Then the connected component in $\Sw[v-7](v)$ containing $u$ is a subgraph of the connected component of $\Sw[4](v)$ containing $u$; and the connected component in $\Sw[v-9](v)$ containing $u$ is a subgraph of the connected component of $\Sw[6](v)$ containing $u$.
	\end{lemma}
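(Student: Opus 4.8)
The plan is to observe that a cycle of length $v-7$ (respectively $v-9$) occurring in a cycle graph $C_{ab}$ is never isolated in that cycle graph: it is always one half of a complementary pair whose other half is a $4$-cycle (respectively a $6$-cycle). Once that is established, the statement follows immediately from Lemma~\ref{lem:compl}(ii).

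Concretely, I would first recall that for any pair $\{a,b\}$ the cycle graph $C_{ab}$ is a $2$-regular graph of order $v-3$ all of whose cycles have even length at least $4$. Suppose $C$ is a cycle of length $v-7$ in some $C_{ab}$. The vertices of $C_{ab}$ not on $C$ induce a $2$-regular graph on $(v-3)-(v-7)=4$ vertices; since its cycle lengths are even and at least $4$, it must be a single $4$-cycle $C'$, so $C$ and $C'$ form a complementary pair. Similarly, if $C$ has length $v-9$, the vertices off $C$ induce a $2$-regular graph on $6$ vertices whose cycle lengths are even and at least $4$, and the only such graph is a single $6$-cycle $C'$; again $C$ and $C'$ form a complementary pair. (Incidentally, the same counting shows why $v-5$ is not an admissible cycle length: such a cycle would leave only two vertices behind.)

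Next I would transfer this to the switching graphs. By Lemma~\ref{lem:compl}(ii), switching $C$ reaches the same isomorphism class as switching the complementary $4$-cycle (respectively $6$-cycle) $C'$. Hence every directed edge of $\DSw[v-7](v)$ is also a directed edge of $\DSw[4](v)$ between the same ordered pair of vertices, and likewise every edge of $\DSw[v-9](v)$ is an edge of $\DSw[6](v)$; passing to the undirected versions, every edge of $\Sw[v-7](v)$ is an edge of $\Sw[4](v)$ and every edge of $\Sw[v-9](v)$ is an edge of $\Sw[6](v)$. All four graphs have the same vertex set, so any path in $\Sw[v-7](v)$ emanating from $u$ is also a path in $\Sw[4](v)$; therefore the connected component of $u$ in $\Sw[v-7](v)$ is a subgraph of its connected component in $\Sw[4](v)$, and the analogous argument settles the $v-9$ case. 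I do not anticipate a genuine obstacle here; the only point needing care is the (trivial) enumeration of $2$-regular graphs on $4$ and on $6$ vertices with all cycle lengths even and at least $4$.
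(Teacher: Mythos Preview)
Your proof is correct and follows exactly the same route as the paper's: a $(v-7)$-cycle (respectively $(v-9)$-cycle) in $C_{ab}$ is necessarily complementary to a $4$-cycle (respectively $6$-cycle), so Lemma~\ref{lem:compl}(ii) gives the edge-set inclusion and hence the containment of connected components. The paper states this in two sentences; you have simply spelled out the easy counting that forces the complement to be a single short cycle.
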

	\begin{proof}
		Every $(v-7)$-cycle is necessarily complementary to a 4-cycle, and so by Lemma~\ref{lem:compl}, any isomorphism class reached by a $(v-7)$-cycle switch is also reached by some 4-cycle switch. A similar argument holds for $(v-9)$-cycles and 6-cycles.
	\end{proof}
	
	We now summarise the results for the cycle switching graphs $\Sw[\ell](19)$, where $\ell\in\{4,6,8,10,12\}$. Note that by Lemma~\ref{lem:compl}(i), the graph $\DSw[16](19)$ consists of a set of isolated vertices, possibly with a  number of loops at each vertex. Thus the connected components of $\Sw[16](19)$ are 11,084,874,829 isolated vertices.
	
	\subsection{Switching 4-cycles}
	We carried out the algorithm described in Section~\ref{sec:alg}, using a maximum distance of 4 in step 2. After steps 2 and 3, there were 2,591 identified systems without a 4-cycle. An $\STS$ without 4-cycles is also known as an \emph{anti-Pasch} system; the 2,591 anti-Pasch systems agrees with the results in~\cite{EIGHT}. Step 3 also identified 139 components containing between two and four systems each; these are illustrated in Figure~\ref{fig:small} as types (b) to (f). Component type (b) consists of two $\STS(19)$s with a single 4-cycle each, with the property that switching the 4-cycle in one system reaches the isomorphism class of the other. Such an arrangement is known as a \emph{twin} system, a concept first introduced in~\cite{GGM2}. The count of 126 pairs of twins agrees with~\cite{EIGHT}.
	
	\begin{figure}\centering
		\begin{tabular}{M{1cm}M{7cm}M{1.5cm}}
			\multicolumn{2}{c}{Component type} & Number \\
			\hline\hline
			& & \\
			
			(a) &
			\begin{tikzpicture}[x=0.3mm,y=0.3mm,thick,vertex/.style={circle,draw,minimum size=10,inner sep=0pt,fill=lightgray}]
				\node at (0,0) [vertex] (v1) {};
			\end{tikzpicture}
			& 2,591 \\
			
			(b) &
			\begin{tikzpicture}[x=0.3mm,y=0.3mm,thick,vertex/.style={circle,draw,minimum size=10,inner sep=0pt,fill=lightgray}]
				\node at (-40,0) [vertex] (v1) {};
				\node at (40,0) [vertex] (v2) {};
				\draw [<->] (v1) to (v2);
			\end{tikzpicture}
			& 126 \\
			
			(c) &
			\begin{tikzpicture}[x=0.3mm,y=0.3mm,thick,vertex/.style={circle,draw,minimum size=10,inner sep=0pt,fill=lightgray}]
				\node at (-40,0) [vertex] (v1) {};
				\node at (40,0) [vertex] (v2) {};
				\draw [<->] (v1) to (v2);
				\draw [->,loop left, min distance=1.5cm,in=225,out=135] (v1) to (v1);
			\end{tikzpicture}
			& 1 \\
			
			(d) &
			\begin{tikzpicture}[x=0.3mm,y=0.3mm,thick,vertex/.style={circle,draw,minimum size=10,inner sep=0pt,fill=lightgray}]
				\node at (-30,0) [vertex] (v1) {};
				\node at (30,0) [vertex] (v2) {};
				\node at (90,0) [vertex] (v3) {};
				\draw [<->] (v1) to (v2);
				\draw [<->] (v3) to (v2);
			\end{tikzpicture}
			& 9 \\
			
			(e) &
			\begin{tikzpicture}[x=0.3mm,y=0.3mm,thick,vertex/.style={circle,draw,minimum size=10,inner sep=0pt,fill=lightgray}]
				\node at (-20,0) [vertex] (v1) {};
				\node at (20,0) [vertex] (v2) {};
				\node at (60,0) [vertex] (v3) {};
				\node at (100,0) [vertex] (v4) {};
				\draw [<->] (v1) to (v2);
				\draw [<->] (v3) to (v2);
				\draw [<->] (v3) to (v4);
			\end{tikzpicture}
			& 1 \\
			
			(f) &
			\begin{tikzpicture}[x=0.3mm,y=0.3mm,thick,vertex/.style={circle,draw,minimum size=10,inner sep=0pt,fill=lightgray}]
				\node at (-40,0) [vertex] (v1) {};
				\node at (40,0) [vertex] (v2) {};
				\node at (0,-20) [vertex] (v3) {};
				\node at (0,20) [vertex] (v4) {};
				\draw [<->] (v1) to (v4);
				\draw [<->] (v2) to (v4);
				\draw [<->] (v1) to (v3);
				\draw [<->] (v2) to (v3);
			\end{tikzpicture}
			& 2 \\
		\end{tabular}
		
		\begin{tabular}{cl}
			Type & Example representative system \\
			\hline
			(a) & \texttt{2468acegiad7ghbei5f89gdih9hbdfibhaiegfiehgcfeigfidcgfhihh} \\
			(b) & \texttt{2468acegih7cdagifga9hdifef7eibdbe9ghiahigbfciegihfgdhfihh} \\
			(c) & \texttt{2468acegiefcdhaig578fcdhi9ibfghhg9deiagdifebhicfgeihhgihf} \\
			(d) & \texttt{2468acegihfceabig9c7adfih8gefiddihgbefheibhdfgifecihgihgh} \\
			(e) & \texttt{2468acegi85fhideg9ibdacghgachif8dgbfh9hfdiebihgifeigehhif} \\
			(f) & \texttt{2468acegiceh7gfbii7adbcfhg8aefhchfigde9ifbifghgdidhigehhi} \\
		\end{tabular}
		
		\caption{Small connected components in $\DSw[4](19)$}
		\label{fig:small}
	\end{figure}
	
	In addition to these small components, step 3 identified one more component containing 16 systems, each having exactly six 4-cycles. These 4-cycles form a structure consisting of six Pasch configurations collectively containing 18 points and 24 blocks. Thus the Pasch configurations are block disjoint. The structures fall into two isomorphism classes ($A$ and $B$), each having automorphism group $C_2\times C_2\times C_2\times D_8$ of order 64. This totally unexpected and somewhat mysterious component is illustrated in Figure~\ref{fig:comp16}, with the labelling of the vertices indicating the isomorphism class of the structure containing the six Pasch configurations of the $\STS(19)$. The component is bipartite, but does not respect the isomorphism partitioning of the Pasch configuration structures. Its automorphism group has order 256 and acts with two orbits, one on the eight vertices incident with a double directed edge and the other on the remaining eight vertices. In contrast to the orders of the automorphism groups of the component and of the Pasch configuration structures, every $\STS(19)$ in the component has trivial automorphism group!
	
	\begin{figure}\centering
		\begin{tikzpicture}[x=0.2mm,y=0.2mm,very thick,vertex/.style={circle,draw,minimum size=18,fill=white,inner sep=0pt}]
	\footnotesize
	\node at (170.4,-99.1) [vertex] (v1) {$A2$};
	\node at (95.6,-174.2) [vertex] (v2) {$B2$};
	\node at (-170.1,-99.9) [vertex] (v3) {$A0$};
	\node at (287.5,-123.3) [vertex] (v4) {$B6$};
	\node at (-287.7,115) [vertex] (v5) {$B7$};
	\node at (169.9,91.5) [vertex] (v6) {$A1$};
	\node at (-95,-174.7) [vertex] (v7) {$B0$};
	\node at (119,-291.8) [vertex] (v8) {$A6$};
	\node at (-119.2,283.5) [vertex] (v9) {$A7$};
	\node at (94.8,166.3) [vertex] (v10) {$B1$};
	\node at (287.5,115) [vertex] (v11) {$B5$};
	\node at (-287.8,-123.2) [vertex] (v12) {$B4$};
	\node at (-170.6,90.7) [vertex] (v13) {$A3$};
	\node at (119.1,283.4) [vertex] (v14) {$A5$};
	\node at (-119.3,-291.8) [vertex] (v15) {$A4$};
	\node at (-95.8,165.7) [vertex] (v16) {$B3$};
	\draw [<->,bend left=20] (v1) to (v2);
	\draw [<->,bend left=20] (v2) to (v1);
	\draw [<->] (v1) to (v3);
	\draw [<->] (v1) to (v4);
	\draw [<->] (v1) to (v5);
	\draw [<->] (v1) to (v6);
	\draw [<->] (v2) to (v7);
	\draw [<->] (v2) to (v8);
	\draw [<->] (v2) to (v9);
	\draw [<->] (v2) to (v10);
	\draw [<->,bend left=20] (v3) to (v7);
	\draw [<->,bend left=20] (v7) to (v3);
	\draw [<->] (v3) to (v11);
	\draw [<->] (v3) to (v12);
	\draw [<->] (v3) to (v13);
	\draw [<->] (v4) to (v8);
	\draw [<->] (v4) to (v9);
	\draw [<->] (v4) to (v11);
	\draw [<->] (v4) to (v12);
	\draw [<->] (v4) to (v13);
	\draw [<->] (v5) to (v8);
	\draw [<->] (v5) to (v9);
	\draw [<->] (v5) to (v11);
	\draw [<->] (v5) to (v12);
	\draw [<->] (v5) to (v13);
	\draw [<->,bend left=20] (v6) to (v10);
	\draw [<->,bend left=20] (v10) to (v6);
	\draw [<->] (v6) to (v11);
	\draw [<->] (v6) to (v12);
	\draw [<->] (v6) to (v13);
	\draw [<->] (v7) to (v14);
	\draw [<->] (v7) to (v15);
	\draw [<->] (v7) to (v16);
	\draw [<->] (v8) to (v14);
	\draw [<->] (v8) to (v15);
	\draw [<->] (v8) to (v16);
	\draw [<->] (v9) to (v14);
	\draw [<->] (v9) to (v15);
	\draw [<->] (v9) to (v16);
	\draw [<->] (v10) to (v14);
	\draw [<->] (v10) to (v15);
	\draw [<->] (v10) to (v16);
	\draw [<->] (v11) to (v14);
	\draw [<->] (v11) to (v15);
	\draw [<->] (v12) to (v14);
	\draw [<->] (v12) to (v15);
	\draw [<->,bend left=20] (v13) to (v16);
	\draw [<->,bend left=20] (v16) to (v13);
\end{tikzpicture}
		
		\begin{tabular}{ll}
			& \\
			$A0$ & \texttt{2468acegi6d8bhcfid5hiafge7cibghiahgefdfighgecdfehfbgiihhi} \\
			$A1$ & \texttt{2468acegid7ahfbei75gbehfi9igcehfcieghdhigfe9bdfghidihgifh} \\
			$A2$ & \texttt{2468acegi6d8bhcfid5iahgef7cibghhiafgedfighgecdfehfbgiihhi} \\
			$A3$ & \texttt{2468acegid7ahfbei75fciegh9igcehgbehfidhigfe9bdfghidihgifh} \\
			$A4$ & \texttt{2468acegicha89dfi9gbieahffhabig7f9eiddighgdcefihfihgehhgi} \\
			$A5$ & \texttt{2468acegic8f9behii9gefchdd7ghfeehdcgiihcbbigfeghfdiighifh} \\
			$A6$ & \texttt{2468acegifcdbeahiheagibdf7c9digb8aighfgihhfeibgfdiheghihf} \\
			$A7$ & \texttt{2468acegi97cdfehidi7hagfebecgih98feghhgifficgehigbdihfdih} \\
			$B0$ & \texttt{2468acegigbd79ihfi8ehdfcg9cfaehad9ighihfggebighedfiidhfhi} \\
			$B1$ & \texttt{2468acegiad97fheie5ic9hfgdbigchhfegcighiefafdgbdhdihigfhi} \\
			$B2$ & \texttt{2468acegie8b7fhdiidchbfegfadgchgcfibhihdeegfihcgbdiigifhh} \\
			$B3$ & \texttt{2468acegie8c9dfhi59fabghi79gdihbihefgefghidbhgcgiiehifhdf} \\
			$B4$ & \texttt{2468acegi8cf7hedii7gahdfe9bhdgfahebgiedihifdggbficfgiihhh} \\
			$B5$ & \texttt{2468acegic8f9behiibgefcdhd7ghfeehdgciihcbbigfeghfdiighifh} \\
			$B6$ & \texttt{2468acegie57fdchiagbfc9hic9igdhh9deifghefibdigbfhieghihfg} \\
			$B7$ & \texttt{2468acegiec7bdfhiid8hbegffah9dgg8ahfideihfcigiegbifhgdihh} \\
\end{tabular}
		\caption{The connected component of order 16 in $\DSw[4](19)$}
		\label{fig:comp16}
	\end{figure}
	
	At step 4, there were 83 systems remaining. As a first attempt we chose $k_i=4$ for all $i$. The corresponding graph $W$ turned out not to be connected; three vertices were isolated. However, choosing larger values of $k_i$ for these awkward vertices, we were able to show that the graph $W$ in step 4 is connected. Thus all the remaining $\STS(19)$s form a single connected component, and step 5 was not required.
	
	\newpage
	Summarising these results we have the following theorem.
	
	\begin{theorem}\label{thm:c4}
		The Pasch (4-cycle) switch graph $\Sw[4](19)$ consists of 2,732 connected components as follows.
		\begin{itemize}[itemsep=-1ex]
			\item 2,591 anti-Pasch systems as in Figure~\ref{fig:small}(a);
			\item 126 pairs of twin systems as in Figure~\ref{fig:small}(b);
			\item one component as in Figure~\ref{fig:small}(c);
			\item nine components as in Figure~\ref{fig:small}(d);
			\item one component as in Figure~\ref{fig:small}(e);
			\item two components as in Figure~\ref{fig:small}(f);
			\item one component as in Figure~\ref{fig:comp16};
			\item one component containing the remaining 11,084,871,929 systems.
		\end{itemize}
	\end{theorem}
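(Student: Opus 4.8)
The plan is to prove Theorem~\ref{thm:c4} computationally, by running the algorithm of Section~\ref{sec:alg} with $c=4$; below I describe the approach and the points that need care.

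The starting point is the correctness guarantee behind Steps~1 and~2. Since $<$ is a strict partial order on the finitely many vertices of $\DSw[4](19)$, every connected component contains a vertex at which $N_4(\cdot)$ attains its maximum over that component, and such a vertex is discarded by neither Step~1 nor Step~2: a system $u$ is discarded only when some system with strictly more $4$-cycles lies within distance $k$ of $u$, which is impossible for a component-maximiser. Running Steps~1 and~2 with $k=4$ therefore leaves a surviving set of systems that meets every connected component of $\Sw[4](19)$, so no component can be overlooked. I expect the main obstacle to be precisely the cost of Step~2: the recursive distance-$4$ exploration from each of the roughly $1.1\times 10^{10}$ systems is by far the dominant computation, and the whole approach hinges on it cutting the survivor population down to a few thousand.

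I would then sort the survivors into three groups. First, the systems with no $4$-cycle: these are exactly the anti-Pasch $\STS(19)$s, each an isolated vertex, giving the $2{,}591$ components of type~(a), a count I would cross-check against~\cite{EIGHT}. Second, running Step~3 with $N\approx 100$, any remaining survivor whose recursive exploration closes up into a set of at most a few dozen vertices delivers a complete connected component; this should produce the $126$ twin pairs of type~(b) (again cross-checked against~\cite{EIGHT}), the single component of type~(c), the nine of type~(d), the one of type~(e), the two of type~(f), and the order-$16$ component of Figure~\ref{fig:comp16}. For the last I would additionally verify directly from the listed systems its claimed structural features: six block-disjoint Pasch configurations on $18$ points, the two isomorphism classes $A$ and $B$ of the configuration structure, the component automorphism group of order $256$ with its two vertex orbits, and the triviality of the automorphism group of each constituent $\STS(19)$. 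Third, the $83$ survivors whose exploration does not close up: for each I would compute a closed $k_i$-neighbourhood in $\Sw[4](19)$ and assemble the auxiliary graph $W$ of Step~4, taking $k_i=4$ for all but three awkward vertices and enlarging $k_i$ for those three until $W$ is connected; by the Step~4 argument the $83$ systems then all lie in one connected component, and Step~5 is unnecessary. The delicate point in this last group is confirming that the three awkward vertices genuinely join up in $W$ rather than concealing a further component — exactly the contingency Step~5 exists to eliminate.

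Finally I would check that the tallies are consistent. Each component of the second group is closed under $4$-cycle switching, hence disjoint from the single component of the third group, so all the components listed really are distinct; their count is $2591+126+1+9+1+2+1+1=2732$. At the level of systems, the $2591$ anti-Pasch systems, the $252+2+27+4+8=293$ systems in the components of types~(b)--(f), the $16$ systems of the order-$16$ component, and the $11{,}084{,}871{,}929$ systems of the large component sum to $11{,}084{,}874{,}829$, the total number of $\STS(19)$s, which completes the verification.
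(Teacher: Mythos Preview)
Your proposal is correct and follows essentially the same approach as the paper: you run the algorithm of Section~\ref{sec:alg} with $c=4$ and $k=4$ in Step~2, harvest the anti-Pasch isolates and the small components (types (b)--(f) and the order-$16$ component) at Step~3, and then show via Step~4 with $k_i=4$ (enlarged for three awkward vertices) that the remaining $83$ survivors lie in a single component, rendering Step~5 unnecessary. Your added justification of why Steps~1--2 cannot lose a component (via maximal elements of the partial order) and the final arithmetic consistency check are welcome elaborations, but the route is the paper's own.
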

	
	\subsection{Switching 6-cycles}
	As in~\cite{EIGHT}, we find a single 6-cycle free system. On base set $\{\infty,A_i,B_i,C_i:i\in\mathbb{Z}_6\}$ the blocks are the orbits generated by the starters $\{\infty,A_0,A_3\}$, $\{\infty,B_0,B_3\}$, $\{\infty,C_0,C_3\}$, $\{A_0,A_1,B_0\}$, $\{A_0,A_2,C_1\}$, $\{B_0,B_1,C_1\}$, $\{B_0,B_2,A_4\}$, $\{C_0,C_1,B_2\}$, $\{C_0,C_2,A_4\}$, $\{A_0,B_3,C_0\}$, $\{A_0,B_1,C_3\}$ under the permutation $(\infty)(A_0\ A_1\ldots A_5)(B_0\ B_1\ldots B_5)(C_0\ C_1\ldots C_5)$. The system is therefore 3-rotational.
	
	
	There is one component of order 24, consisting of systems each containing 36 6-cycles. This component is illustrated in Figure~\ref{fig:c6comp24}; because of the large out-degree of each vertex, we have suppressed multiple edges and loops, so the figure represents the undirected component of $\Sw[6](19)$. Note that as an undirected graph, this component is not regular; the vertex degrees range from 9 to 19. The automorphism group of the undirected component has order 4. A representative system, from which the component may be reconstructed by 6-cycle switching, is the following. 
	
	\begin{config}
		2468acegif58aehdi5ha9gdficeidhgegfbdic9ieghhgiffibehhigfh
	\end{config}
	
	\begin{figure}
		\centering
		\begin{tikzpicture}[x=0.22mm,y=0.22mm,thick,vertex/.style={circle,draw,minimum size=10,fill=lightgray}]
	\node at (-40.8,-263.4) [vertex] (v1) {};
	\node at (-108.2,-245.2) [vertex] (v2) {};
	\node at (-168.6,-210.3) [vertex] (v3) {};
	\node at (-217.9,-160.9) [vertex] (v4) {};
	\node at (241.1,-101) [vertex] (v5) {};
	\node at (-270.7,-33.1) [vertex] (v6) {};
	\node at (-270.6,36.7) [vertex] (v7) {};
	\node at (-252.5,104) [vertex] (v8) {};
	\node at (259.2,-33.6) [vertex] (v9) {};
	\node at (-217.5,164.4) [vertex] (v10) {};
	\node at (241.3,103.6) [vertex] (v11) {};
	\node at (206.2,-161.3) [vertex] (v12) {};
	\node at (156.8,-210.6) [vertex] (v13) {};
	\node at (-252.6,-100.5) [vertex] (v14) {};
	\node at (96.4,-245.4) [vertex] (v15) {};
	\node at (28.9,-263.4) [vertex] (v16) {};
	\node at (259.3,36.2) [vertex] (v17) {};
	\node at (-40.3,266.5) [vertex] (v18) {};
	\node at (29.4,266.5) [vertex] (v19) {};
	\node at (-168.2,213.7) [vertex] (v20) {};
	\node at (-107.7,248.5) [vertex] (v21) {};
	\node at (206.5,164) [vertex] (v22) {};
	\node at (96.8,248.3) [vertex] (v23) {};
	\node at (157.2,213.4) [vertex] (v24) {};
	\draw (v1) to (v2);
	\draw (v1) to (v3);
	\draw (v1) to (v4);
	\draw (v1) to (v5);
	\draw (v1) to (v6);
	\draw (v1) to (v7);
	\draw (v1) to (v8);
	\draw (v1) to (v9);
	\draw (v1) to (v10);
	\draw (v1) to (v11);
	\draw (v1) to (v12);
	\draw (v1) to (v13);
	\draw (v1) to (v14);
	\draw (v1) to (v15);
	\draw (v1) to (v16);
	\draw (v1) to (v17);
	\draw (v1) to (v18);
	\draw (v2) to (v3);
	\draw (v2) to (v4);
	\draw (v2) to (v5);
	\draw (v2) to (v6);
	\draw (v2) to (v7);
	\draw (v2) to (v10);
	\draw (v2) to (v11);
	\draw (v2) to (v13);
	\draw (v2) to (v14);
	\draw (v2) to (v15);
	\draw (v2) to (v16);
	\draw (v2) to (v17);
	\draw (v2) to (v18);
	\draw (v2) to (v19);
	\draw (v2) to (v20);
	\draw (v2) to (v21);
	\draw (v3) to (v4);
	\draw (v3) to (v5);
	\draw (v3) to (v6);
	\draw (v3) to (v7);
	\draw (v3) to (v9);
	\draw (v3) to (v10);
	\draw (v3) to (v11);
	\draw (v3) to (v12);
	\draw (v3) to (v13);
	\draw (v3) to (v15);
	\draw (v3) to (v16);
	\draw (v3) to (v17);
	\draw (v3) to (v18);
	\draw (v3) to (v19);
	\draw (v3) to (v22);
	\draw (v4) to (v6);
	\draw (v4) to (v9);
	\draw (v4) to (v10);
	\draw (v4) to (v11);
	\draw (v4) to (v13);
	\draw (v4) to (v14);
	\draw (v4) to (v16);
	\draw (v4) to (v19);
	\draw (v4) to (v20);
	\draw (v4) to (v21);
	\draw (v5) to (v6);
	\draw (v5) to (v7);
	\draw (v5) to (v9);
	\draw (v5) to (v11);
	\draw (v5) to (v12);
	\draw (v5) to (v14);
	\draw (v5) to (v15);
	\draw (v5) to (v16);
	\draw (v5) to (v17);
	\draw (v5) to (v18);
	\draw (v5) to (v19);
	\draw (v5) to (v20);
	\draw (v5) to (v22);
	\draw (v5) to (v23);
	\draw (v6) to (v7);
	\draw (v6) to (v8);
	\draw (v6) to (v10);
	\draw (v6) to (v11);
	\draw (v6) to (v12);
	\draw (v6) to (v13);
	\draw (v6) to (v14);
	\draw (v6) to (v16);
	\draw (v6) to (v17);
	\draw (v6) to (v18);
	\draw (v6) to (v19);
	\draw (v6) to (v20);
	\draw (v6) to (v21);
	\draw (v6) to (v22);
	\draw (v7) to (v8);
	\draw (v7) to (v9);
	\draw (v7) to (v10);
	\draw (v7) to (v11);
	\draw (v7) to (v13);
	\draw (v7) to (v14);
	\draw (v7) to (v15);
	\draw (v7) to (v16);
	\draw (v7) to (v17);
	\draw (v7) to (v18);
	\draw (v7) to (v19);
	\draw (v7) to (v20);
	\draw (v7) to (v22);
	\draw (v7) to (v23);
	\draw (v8) to (v9);
	\draw (v8) to (v10);
	\draw (v8) to (v12);
	\draw (v8) to (v13);
	\draw (v8) to (v14);
	\draw (v8) to (v15);
	\draw (v8) to (v16);
	\draw (v8) to (v17);
	\draw (v8) to (v18);
	\draw (v8) to (v22);
	\draw (v8) to (v23);
	\draw (v9) to (v10);
	\draw (v9) to (v11);
	\draw (v9) to (v12);
	\draw (v9) to (v13);
	\draw (v9) to (v14);
	\draw (v9) to (v15);
	\draw (v9) to (v16);
	\draw (v9) to (v17);
	\draw (v9) to (v18);
	\draw (v9) to (v19);
	\draw (v9) to (v22);
	\draw (v9) to (v23);
	\draw (v9) to (v24);
	\draw (v10) to (v11);
	\draw (v10) to (v13);
	\draw (v10) to (v14);
	\draw (v10) to (v17);
	\draw (v10) to (v18);
	\draw (v10) to (v20);
	\draw (v11) to (v16);
	\draw (v11) to (v17);
	\draw (v11) to (v19);
	\draw (v11) to (v21);
	\draw (v11) to (v22);
	\draw (v12) to (v13);
	\draw (v12) to (v15);
	\draw (v12) to (v16);
	\draw (v12) to (v18);
	\draw (v12) to (v22);
	\draw (v12) to (v23);
	\draw (v12) to (v24);
	\draw (v13) to (v14);
	\draw (v13) to (v15);
	\draw (v13) to (v16);
	\draw (v13) to (v17);
	\draw (v13) to (v18);
	\draw (v13) to (v19);
	\draw (v13) to (v22);
	\draw (v14) to (v15);
	\draw (v14) to (v16);
	\draw (v14) to (v17);
	\draw (v14) to (v18);
	\draw (v14) to (v19);
	\draw (v14) to (v21);
	\draw (v14) to (v24);
	\draw (v15) to (v16);
	\draw (v15) to (v17);
	\draw (v15) to (v18);
	\draw (v15) to (v19);
	\draw (v15) to (v22);
	\draw (v15) to (v23);
	\draw (v15) to (v24);
	\draw (v16) to (v17);
	\draw (v16) to (v19);
	\draw (v16) to (v22);
	\draw (v17) to (v18);
	\draw (v17) to (v19);
	\draw (v17) to (v21);
	\draw (v17) to (v22);
	\draw (v17) to (v24);
	\draw (v18) to (v19);
	\draw (v18) to (v21);
	\draw (v18) to (v24);
	\draw (v19) to (v20);
	\draw (v19) to (v22);
	\draw (v19) to (v23);
	\draw (v20) to (v21);
	\draw (v20) to (v23);
	\draw (v21) to (v24);
	\draw (v22) to (v24);
	\draw (v23) to (v24);
\end{tikzpicture}
		\caption{The connected component of order 24 in $\Sw[6](19)$}
		\label{fig:c6comp24}
	\end{figure}
	
	This time, we were unable to show that the graph $W$ in step 4 is connected, but construction of the graph revealed that the remaining systems formed at most two connected components. Thus we used step 5 and identified a further large connected component containing 284,433 vertices. We summarise these results in the following theorem.
	
	\begin{theorem}\label{thm:c6}
		The four connected components of $\Sw[6](19)$ are as follows.
		\begin{itemize}[itemsep=-1ex]
			\item one 6-cycle free system;
			\item one component of order 24 as in Figure~\ref{fig:c6comp24};
			\item one component containing 284,433 systems;
			\item one component containing the remaining 11,084,590,371 systems.
		\end{itemize}
	\end{theorem}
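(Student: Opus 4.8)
The statement to prove is Theorem~\ref{thm:c6}, which enumerates the four connected components of $\Sw[6](19)$. The plan is essentially to apply the five-step algorithm of Section~\ref{sec:alg} and then to certify each claimed component. First I would run Step~1: for each of the 11,084,874,829 isomorphism classes, enumerate all 6-cycles in the system, perform each 6-cycle switch, count the 6-cycles in the resulting system (before canonical labelling, using only the cheap cycle count $N_6$), and discard the system whenever a neighbour has strictly more 6-cycles. Then Step~2 deepens this to a bounded-distance search (distance $4$ worked here), recursively exploring switched systems and discarding $u$ as soon as any system within distance $k$ has a larger $N_6$. This is the main computational workhorse; its correctness rests only on the observation already noted in Section~\ref{sec:alg} that every connected component of $\DSw[6](19)$ contains a $<$-maximal vertex, so no component can be missed by discarding non-maximal vertices.

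Next, Steps~3--5 partition the surviving vertices. Step~3 (local exploration with a small cap $N\approx 100$) should peel off the small components: here it isolates the unique $6$-cycle-free system and the order-$24$ component of Figure~\ref{fig:c6comp24}. For the $6$-cycle-free system I would separately verify the explicit construction given in the text --- namely, that the $3$-rotational system on $\{\infty\}\cup\{A_i,B_i,C_i:i\in\mathbb Z_6\}$ generated by the listed eleven starter blocks under the stated order-$6$ permutation is indeed an $\STS(19)$ (check that the $57$ blocks cover every pair exactly once) and that its cycle graphs contain no $4$-cycle wait --- no, no $6$-cycle (examine each $C_{ab}$ and confirm all cycle lengths avoid $6$); this matches the single $6$-cycle-free count in~\cite{EIGHT}. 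For the order-$24$ component I would exhibit the representative system displayed in the \texttt{config} block, generate its component by breadth-first $6$-cycle switching with canonical labelling via \texttt{nauty}, confirm closure at $24$ vertices each with $36$ $6$-cycles, and read off the undirected structure of Figure~\ref{fig:c6comp24} together with its automorphism group of order~$4$.

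After Step~3 the remaining vertices fall into the large component(s). In Step~4 I would form the auxiliary graph $W$ on the surviving representatives by joining $v_i\sim v_j$ whenever their closed $k_i$-neighbourhoods in $\Sw[6](19)$ intersect; since each surviving vertex was only discarded in Steps~1--2 because it is connected to a vertex with more $6$-cycles, each is genuinely connected to the bulk of the graph, so each component of $W$ lies inside a single component of $\Sw[6](19)$. Here $W$ turns out to have at most two components, so (unlike the $4$-cycle case) connectivity is not immediate. Finally Step~5: pick one representative from each $W$-component and run the local exploration again with a large cap ($N=10^6$ here); this detects the component of order $284{,}433$ as genuinely separate from the giant component, confirming two large components rather than one. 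Summing: $1 + 1 + 1 + 1 = 4$ components, with the giant one containing $11{,}084{,}874{,}829 - 1 - 24 - 284{,}433 = 11{,}084{,}590{,}371$ systems, which is exactly the count claimed.

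The main obstacle is not mathematical but the scale and the verification burden of Step~5. Steps~1--2 are a massive but embarrassingly parallel computation; the delicate point is that the bounded-distance pruning and the neighbourhood-intersection heuristic of Step~4 only produce a \emph{lower bound} on connectivity --- they can never merge two truly distinct components, but they can fail to certify that two $W$-components are actually distinct. So the crux is Step~5: one must push the local exploration from a representative of each candidate component far enough (here $10^6$ vertices) to be confident the $284{,}433$-vertex component does not secretly join the giant component via some long switching path. Since that exploration terminates having found exactly $284{,}433$ reachable systems and no more, the component is closed under $6$-cycle switching and hence is a genuine connected component; the complement, being connected by the Step~4 graph $W$ restricted to its own $W$-component, is the fourth component. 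This completes the argument.
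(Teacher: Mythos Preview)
Your proposal is correct and follows essentially the same computational approach as the paper: apply the five-step algorithm of Section~\ref{sec:alg}, peel off the unique 6-cycle-free system and the order-24 component at Step~3, observe at Step~4 that the auxiliary graph $W$ has exactly two components (so at most two large $\Sw[6](19)$-components remain), and then certify at Step~5 by exhaustive BFS that one of these has exactly $284{,}433$ vertices, forcing the other to be the giant component by the counting argument you give. Apart from some informal asides and a slightly garbled sentence about surviving versus discarded vertices in your Step~4 paragraph, the logic matches the paper's.
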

	
	It is noteworthy that representatives of the four connected components in $\Sw[6](19)$ are connected via 4-cycle switching. Since by~\cite{EIGHT} there exists an $\STS(19)$ without a $c$-cycle for all $c\in\{4,6,8,10,12,16\}$, it follows that every graph $\Sw[c](19)$ is disconnected. We thus have the following theorem.
	
	\begin{theorem}
		The minimum cardinality of a set $C$ such that $\Sw[C](19)$ is connected is 2, and an example set is $C=\{4,6\}$.
	\end{theorem}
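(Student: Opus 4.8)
The plan is to establish the theorem in two halves: an upper bound (the set $\{4,6\}$ works) and a lower bound (no singleton works). The lower bound is the easy direction and I would dispose of it first. For each cycle length $c \in \{4,6,8,10,12,16\}$, the switching graph $\Sw[c](19)$ is disconnected: for $c = 16$ this is immediate from Lemma~\ref{lem:compl}(i), which forces $\DSw[16](19)$ to be a collection of isolated vertices (more than one, since there are $11{,}084{,}874{,}829 > 1$ of them). For the remaining values $c \in \{4,6,8,10,12\}$, the cited result~\cite{EIGHT} guarantees the existence of at least one $\STS(19)$ containing no $c$-cycle; such a system is an isolated vertex in $\Sw[c](19)$, and since there are clearly other vertices (systems that do admit a $c$-cycle), the graph is disconnected. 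Hence no set $C$ of cardinality~$1$ can make $\Sw[C](19)$ connected, which gives the lower bound $|C| \geq 2$.

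For the upper bound I would invoke Theorems~\ref{thm:c4} and~\ref{thm:c6} together with the remark preceding the theorem. The key observation is that $\Sw[\{4,6\}](19)$ has the same vertex set as $\Sw[4](19)$, and contains every edge of $\Sw[4](19)$ as well as every edge of $\Sw[6](19)$; so its connected components are obtained by merging components of $\Sw[4](19)$ that share a vertex with a common component of $\Sw[6](19)$. By Theorem~\ref{thm:c4}, $\Sw[4](19)$ already has one giant component containing all but $2{,}591 + 252 + 1 + 18 + 3 + 6 + 16 = 2{,}887$ systems, so it suffices to show that each of the small $4$-cycle components (the $2{,}591$ anti-Pasch systems, the twin pairs and the components of types (c)--(f), and the mysterious order-$16$ component) is joined to the giant component once $6$-cycle switches are also permitted. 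The text's remark that ``representatives of the four connected components in $\Sw[6](19)$ are connected via $4$-cycle switching'' is exactly the dual statement needed here: it says that in $\Sw[\{4,6\}](19)$ the four $6$-cycle components all lie in a single component. Since by Theorem~\ref{thm:c6} the $6$-cycle components partition all $11{,}084{,}874{,}829$ vertices, and the four of them are now all in one component of $\Sw[\{4,6\}](19)$, the whole graph $\Sw[\{4,6\}](19)$ is connected.

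The step I expect to require the most care is verifying the bridging claim: that each small component of $\Sw[4](19)$, most critically the order-$16$ component of Figure~\ref{fig:comp16} and a representative anti-Pasch system, is linked to the bulk of the systems by a finite sequence of $4$- and $6$-cycle switches. In practice this is the computational content already carried out by the authors --- one exhibits, for a representative of each small $4$-cycle component, a $6$-cycle whose switch lands in (or eventually reaches) the giant component --- and I would present it by citing the computation rather than redoing it. Formally, since an anti-Pasch $\STS(19)$ must contain a $6$-cycle (it has $57$ blocks and $v - 7 = 12$, so not every pair can give a Hamiltonian cycle graph; more simply, the single $6$-cycle-free system is not anti-Pasch), switching a $6$-cycle in any such system reaches a system with a $4$-cycle, and a short further search connects to the giant component. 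Assembling these observations, $\Sw[\{4,6\}](19)$ is connected, completing the proof.
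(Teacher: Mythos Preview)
Your argument is correct and matches the paper's: the lower bound comes from the existence of a $c$-cycle-free $\STS(19)$ for each $c\in\{4,6,8,10,12,16\}$ (you handle $c=16$ via Lemma~\ref{lem:compl}(i) rather than \cite{EIGHT}, which is equally valid), and the upper bound comes from the observation that representatives of the four components of $\Sw[6](19)$ are linked by $4$-cycle switches, so that $\Sw[\{4,6\}](19)$ is connected. Two small remarks: your tally of systems outside the giant $4$-cycle component should be $2{,}900$ rather than $2{,}887$ (recount the orders of types (c)--(f) in Figure~\ref{fig:small}); and your third paragraph is superfluous --- once the four $\Sw[6]$-components, which partition the full vertex set, are shown to merge under $4$-cycle switching, connectivity of $\Sw[\{4,6\}](19)$ is already established, so there is no residual ``bridging claim'' to check from the $4$-cycle side (and the parenthetical about $57$ blocks and $v-7=12$ does not, in any case, force a $6$-cycle).
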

	
	\subsection{Switching 8-cycles}
	By~\cite{EIGHT}, there are exactly 381 8-cycle free systems and step 3 identified these; therefore there are 381 isolated vertices in $\Sw[8](19)$ of this form. There is one further isolated vertex. On base set $V=\{X,Y,Z,A_i,B_i:i\in\mathbb{Z}_8\}$, the blocks are $\{X,Y,Z\}$ together with the orbits generated by the starters $\{X,A_0,A_1\}$, $\{Y,B_0,B_1\}$, $\{Z,A_0,A_4\}$, $\{Z,B_0,B_4\}$, $\{A_0,A_2,B_3\}$, $\{A_0,A_3,B_0\}$, $\{B_0,B_2,A_4\}$, $\{B_0,B_3,A_1\}$ under the permutation $(Z)(X\ Y)(A_0\ A_1\ldots A_7)(B_0\ B_1\ldots B_7)$. The cycle graph on the pair $\{X,Y\}$ has two 8-cycles in a complementary pair. By Lemma~\ref{lem:compl}, these both switch to the same isomorphism class, which in this case is the original system. There are no other 8-cycles. The cycle graphs on the pairs $\{X,Z\}$ and $\{Y,Z\}$ have four 4-cycles; on the pairs $\{A_i,B_i\}$, $i\in\mathbb{Z}_8$ a 6-cycle and a 10-cycle; and on the pairs $\{A_i,A_{i+3}\}$ and $\{B_i,B_{i+3}\}$, $i\in\mathbb{Z}_8$ a 4-cycle and a 12-cycle. On all other pairs the cycle graph is a Hamiltonian cycle.
	
	%
	
	At step 4, we were able to show that all other vertices in $\Sw[8](19)$ are in a single connected component. We summarise these results in the following theorem.
	\begin{theorem}\label{thm:c8}
		The 383 connected components of $\Sw[8](19)$ are as follows.
		\begin{itemize}[itemsep=-1ex]
			\item 381 8-cycle free systems;
			\item one further system with two complementary 8-cycles,switching to the original system; 
			\item one component containing the remaining 11,084,874,447 systems.
		\end{itemize}
	\end{theorem}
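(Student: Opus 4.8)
The plan is to specialise the five-step procedure of Section~\ref{sec:alg} to $c=8$, supplemented by a direct check on the finitely many exceptional systems. First note the arithmetic: $11{,}084{,}874{,}829 = 381 + 1 + 11{,}084{,}874{,}447$, so it is enough to exhibit exactly $382$ isolated vertices of $\Sw[8](19)$ and to show that all remaining systems lie in a single component.

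The isolated vertices come in two families. For the first, I would quote the count of $381$ $8$-cycle-free $\STS(19)$s from~\cite{EIGHT}: such a system has no $8$-cycle, hence no outgoing edge whatever in $\DSw[8](19)$, so it is isolated in $\Sw[8](19)$; Step~3 of the algorithm independently recovers these $381$ systems, which is a useful consistency check. For the second, I would take the explicitly described $3$-rotational-type system on $\{X,Y,Z,A_i,B_i\}$, verify that its $57$ blocks do cover every pair exactly once (so it is indeed an $\STS(19)$), and then enumerate the cycle graph $C_{pq}$ for all $\binom{19}{2}$ pairs, checking that $8$-cycles occur only on the pair $\{X,Y\}$ and there form a complementary pair. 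Lemma~\ref{lem:compl}(ii) then guarantees that both associated switches reach the same isomorphism class, and a single \texttt{nauty} canonical-labelling comparison identifies that class with the original system; thus in $\Sw[8](19)$ this vertex is isolated (carrying only the suppressed loop recording that it switches to itself). Step~3, run to a modest breadth, should return no component of size between $2$ and its limit, confirming that there are no further small components.

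For the remaining $11{,}084{,}874{,}447$ systems I would invoke the maximal-element reduction underlying the algorithm. In the strict partial order $u<v \iff (u\to v \text{ and } N_8(u)<N_8(v))$, each connected component of the finite graph $\DSw[8](19)$ contains a vertex on which $N_8$ attains its maximum over that component; such a vertex has no vertex at any distance with strictly more $8$-cycles, so it survives Steps~1 and~2. Hence the set surviving Step~2 meets every component, and after Step~3 strips off the $382$ isolated survivors it remains only to show that the rest lie in one component. This is done in Step~4 by choosing radii $k_i$, forming the closed neighbourhoods $\overline{N}_{k_i}(v_i)$ of the surviving vertices $v_1,\dots,v_m$ in $\Sw[8](19)$, and building the auxiliary graph $W$ on $\{v_1,\dots,v_m\}$ with $v_i\sim v_j$ whenever these neighbourhoods meet; if $W$ is connected then so is $\Sw[8](19)$ with the isolated vertices removed. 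Here $W$ does turn out to be connected (enlarging $k_i$ for any stubborn vertex if necessary), so Step~5 is not required and the large component has exactly $11{,}084{,}874{,}829-382$ vertices.

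The obstacle is computational, not conceptual. One must sweep over all $\approx 1.1\times 10^{10}$ isomorphism classes of $\STS(19)$, for each locating every $8$-cycle across all point pairs, performing the switch, counting the $8$-cycles of the image, and --- the genuinely expensive operation --- deciding isomorphism via \texttt{nauty}. The whole point of the Section~\ref{sec:alg} scheme is to replace most of those isomorphism tests by cheap $8$-cycle counts: Steps~1--2 discard the vast majority of systems on the strength of $N_8$-comparisons alone, so that canonical labellings are needed only for the small residue processed in Steps~3--5. The two points that genuinely require care are (i) that the exploration depth $k$ in Step~2 and the breadth limit $N$ in Steps~3 and~5 are taken large enough --- which one certifies only a posteriori, once the surviving set stabilises and $W$ comes out connected --- and (ii) the verification for the single exceptional system that the complementary $8$-cycle switch really returns to the starting isomorphism class, which is exactly what distinguishes it from a twin-type two-vertex component.
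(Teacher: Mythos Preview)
Your proposal is correct and follows essentially the same route as the paper: quote the $381$ $8$-cycle-free systems from~\cite{EIGHT}, verify the one exceptional system by direct inspection of its cycle graphs together with Lemma~\ref{lem:compl}(ii), and then run the Section~\ref{sec:alg} algorithm through Step~4 to certify that the remaining systems form a single component (Step~5 not needed). One small slip: the exceptional system on $\{X,Y,Z,A_i,B_i:i\in\mathbb{Z}_8\}$ is not $3$-rotational---its defining automorphism has cycle type $1+2+8+8$, not a fixed point plus three $6$-cycles---so you should drop that label.
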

	
	\subsection{Switching 10-cycles}
	We find 66 10-cycle free systems, in agreement with~\cite{EIGHT}. By Lemma~\ref{lem:compl}, every connected component of $\Sw[10](19)$ is a subset of some connected component of $\Sw[6](19)$. However, there may be 6-cycles in a given $\STS(19)$ which are not complementary to a 10-cycle, and so the components of $\Sw[6](19)$ may be disconnected in $\Sw[10](19)$. 
	
	The unique 6-cycle free system is of course also 10-cycle free. There is also a connected component of order 24 consisting of the same systems as the component of the same order in the 6-cycle switching graph. Interestingly, and as we have already observed, although the graph of this component in the 6-cycle switching graph is not regular (Figure~\ref{fig:c6comp24}), it is regular of degree 36 in the 10-cycle switching graph!
	
	The connected component of cardinality 284,433 in $\Sw[6](19)$ fractures into 76 components, 65 of which are isolated vertices corresponding to 10-cycle free systems. There are 10 components of ``moderate'' size which are summarised in Table~\ref{tab:c10}. In the table, by ``degree'' we mean the out-degree of a vertex in $\DSw[10](19)$; in other words, the number of 10-cycles in each system in the component. It is an unexpected feature of these components that, with the single exception of the largest component of order 1,078, all systems have the same number of 10-cycles. The remaining component has order 282,123.
	
	The largest connected component in $\Sw[6](19)$ fractures into just two components. One of these has order 56 and is regular of degree 18. A representative system is the following.
	
	\begin{config}
		2468acegib6cfgdih5cbhegdicgdhfibi9hgf9feihahifgedcifeghhi
	\end{config}
	
	The other component contains the remaining 11,084,590,315 systems.
	
	\begin{table}
		\centering\small
		\begin{tabular}{|l|l|l|l|}
			\hline
			\# & Order & Degree & Representative system\\
			\hline
			1 & 84 & 18 & 
			\texttt{2468acegi687hcigf758eghdi8cfghi7difhgideghbeifhfhdeigbfih} \\
			2 & 152 & 30 & 
			\texttt{2468acegi6a9cedih95ahcgfiaebdhi9gdfihifeghfhiegdbighfghif} \\
			3 & 152 & 24 & 
			\texttt{2468acegi678ebdih857hcfig7cedhi8fgeihbfhgidhigfgihefidgfh} \\
			4 & 152 & 18 & 
			\texttt{2468acegi6bcg9fhic5bedhgibdhfigchgidffedhiafgihbicieefhgh} \\
			5 & 294 & 30 & 
			\texttt{2468acegii6geacdhahf89idghecgdfafbegiidbeafhigghifhhehfii} \\
			6 & 1078 & 26 (432) & 
			\texttt{2468acegia9bfhegie7gcdhif89fidhedgbhidihfiaghgcefhidegfhi} \\
			&  & 28 (432) & \\
			&  & 32 (214) & \\
			7 & 108 & 20 & 
			\texttt{2468acegif79cigehg9dfbhiehbicdeeabdihfagiheigedggfhffhihi} \\
			8 & 108 & 28 & 
			\texttt{2468acegifecgahidcdgbi9hf8iebdhh8bgcifebicdfhhgigeffgihhi} \\
			9 & 98 & 27 & 
			\texttt{2468acegid7h9bgif7afhibdgig9fehbihefggedcfadecbhigfihihhi} \\
			10 & 19 & 27 & 
			\texttt{2468acegi578ibfgh867edfhi7hcgif8gihefbfdihdeighcgehifhdig} \\
			\hline
		\end{tabular}
		\caption{Components in $\DSw[10](19)$}
		\label{tab:c10}
	\end{table}
	
	\newpage
	We summarise the results for 10-cycle switching below.
	
	\begin{theorem}\label{thm:c10}
		The 80 connected components of $\Sw[10](19)$ are as follows.
		\begin{itemize}[itemsep=-1ex]
			\item 66 10-cycle free systems;
			\item one component containing 24 systems, being the same systems as those of the component of cardinality 24 in $\Sw[6](19)$;
			\item 10 components as in Table~\ref{tab:c10} containing a total of 2,245 systems;
			\item one component containing 282,123 systems;
			\item one component containing 56 systems;
			\item one component containing 11,084,590,315 systems.
		\end{itemize}
	\end{theorem}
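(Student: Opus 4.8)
The plan is to use Lemma~\ref{lem:subg} to reduce the problem to understanding how the four connected components of $\Sw[6](19)$ (Theorem~\ref{thm:c6}) split when only $10$-cycle switches are allowed, and then to treat the small pieces directly and the giant piece by rerunning the algorithm of Section~\ref{sec:alg}. The reduction rests on an elementary observation: in an $\STS(19)$ a $10$-cycle lies in a cycle graph $C_{ab}$ of order $16$, and since the only partition of the remaining $6$ vertices into even cycles of length at least $4$ is a single $6$-cycle, every $10$-cycle is complementary to a $6$-cycle. Hence, by Lemma~\ref{lem:compl}(ii), switching a $10$-cycle reaches the same isomorphism class as switching its complementary $6$-cycle; so $\Sw[10](19)$ is a spanning subgraph of $\Sw[6](19)$ and every component of the former is contained in one of the four components of the latter. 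In particular the unique $6$-cycle-free system has no $10$-cycle and is an isolated vertex of $\Sw[10](19)$.

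For the $\Sw[6](19)$-components of orders $24$ and $284{,}433$ explicit vertex lists are already available from the proof of Theorem~\ref{thm:c6}, so here the work is finite and direct. For the component of order $24$ (Figure~\ref{fig:c6comp24}) one enumerates all $10$-cycles of each of the $24$ systems, performs the switches, identifies the reached systems via canonical labelling, and checks that the induced subgraph of $\Sw[10](19)$ is connected and regular of degree $36$. For the component of order $284{,}433$ one restricts the switching relation to $10$-cycles (which, by Lemma~\ref{lem:compl}(ii), keeps one inside the piece), identifies the reached systems, and runs a union-find over its vertices; this should produce $65$ isolated vertices (the $10$-cycle-free systems in this piece), the $10$ ``moderate'' components recorded in Table~\ref{tab:c10}, and a residual component of order $282{,}123$, with $65+2{,}245+282{,}123 = 284{,}433$ and $1+65 = 66$ (the total number of $10$-cycle-free systems, matching~\cite{EIGHT}) serving as consistency checks.

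The main obstacle is the giant component of $\Sw[6](19)$, of order $11{,}084{,}590{,}371$, which cannot be enumerated vertex by vertex. For this we rerun the algorithm of Section~\ref{sec:alg} with $c = 10$: Steps~1 and~2 discard every system that is not maximal, within a bounded switching distance, in the strict partial order $u < v$ given by $u \to v$ and $N_{10}(u) < N_{10}(v)$ (safe because each component contains a maximal element), Step~3 with a moderate limit $N$ should expose the component of order $56$ (regular of degree $18$) and confirm no other small components survive, Step~4 builds the auxiliary graph $W$ on the remaining candidate vertices by intersecting closed $k_i$-neighbourhoods, and Step~5 with a large $N$ resolves any ambiguity left by $W$. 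The hard part, exactly as for $4$-cycle switching, will be forcing $W$ to become connected: one must be prepared to enlarge the radii $k_i$ for the stubborn vertices and the bound $N$ until it is established that everything not already accounted for forms a single component of order $11{,}084{,}874{,}829 - 66 - 24 - 2{,}245 - 282{,}123 - 56 = 11{,}084{,}590{,}315$. The concluding check is that the orders of all $80$ components sum to $11{,}084{,}874{,}829$.
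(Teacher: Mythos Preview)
Your proposal is correct and follows essentially the same approach as the paper: use Lemma~\ref{lem:subg} to confine each $\Sw[10](19)$-component inside one of the four $\Sw[6](19)$-components of Theorem~\ref{thm:c6}, handle the singleton, the $24$-system component and the $284{,}433$-system component by direct enumeration, and resolve the giant component via the algorithm of Section~\ref{sec:alg} with $c=10$ to isolate the $56$-system component and certify the remainder is connected. Your arithmetic checks and the placement of the $65+1=66$ ten-cycle-free systems match the paper's findings exactly.
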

	
	\subsection{Switching 12-cycles}
	By Lemma~\ref{lem:subg}, every component of $\Sw[12](19)$ is a subset of some component of $\Sw[4](19)$. There are 2,727 systems with no 12-cycle, again in agreement with~\cite{EIGHT}. There are a further 147 small components identified at step 3 of the algorithm, as pictured in Figure~\ref{fig:c12}. 
	
	Component types (c) and (d) are interesting. There are a total of 126 such components; the systems involved are precisely the 126 pairs of twins which appear as components in the 4-cycle switching graph. Note that although the systems in these components have two or three 12-cycles, each has only a single 4-cycle; this situation is explained by the remark in Section~\ref{sec:intro}, since a 4-cycle may be complementary to up to three 12-cycles.
	
	Components (k) and (l) are subsets of the component containing 16 systems in $\DSw[4](19)$. In Figure~\ref{fig:c12}, the vertices in these components are identified with the corresponding labels from Figure~\ref{fig:comp16}.
	
	At step 4, the remaining systems all form a single component. We summarise these results in the following theorem.
	
	\begin{theorem}\label{thm:c12}
		The 2,825 connected components of $\Sw[12](19)$ are as follows.
		\begin{itemize}[itemsep=-1ex]
			\item 2,727 12-cycle free systems;
			\item 147 components as in Figure~\ref{fig:c12} (b) to (l) containing a total of 320 systems;
			\item one component containing the remaining 11,084,871,782 systems.
		\end{itemize}
	\end{theorem}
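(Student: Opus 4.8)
The plan is to run the five-step algorithm of Section~\ref{sec:alg} with cycle length $c=12$, using Lemma~\ref{lem:subg} to cut the search down at the outset. Since $12=19-7$, every $12$-cycle in an $\STS(19)$ is complementary to a $4$-cycle, so by Lemma~\ref{lem:compl}(ii) every directed edge of $\DSw[12](19)$ is also a directed edge of $\DSw[4](19)$; consequently every connected component of $\Sw[12](19)$ is contained in a single connected component of $\Sw[4](19)$. This lets us import the structure of $\Sw[4](19)$ from Theorem~\ref{thm:c4}: the small components of $\Sw[4](19)$ (including the $2{,}591$ anti-Pasch systems, which are in particular $12$-cycle free) can be examined by hand, while all the remaining systems lie in the giant component of $\Sw[4](19)$, and it is only within that component that a substantial computation is needed.

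First I would run Steps 1 and 2 with maximum distance $k=4$, discarding every vertex $u$ that is not maximal under the partial order $u<v \iff u\to v$ and $N_{12}(u)<N_{12}(v)$; since every component contains a maximal vertex, no component is lost. Step 3 is then applied to peel off the small components. This should recover $2{,}727$ isolated vertices, corresponding to the systems with no $12$-cycle, a figure to be cross-checked against~\cite{EIGHT}, together with $147$ further small components of total order $320$, as depicted in Figure~\ref{fig:c12}. Two of these families require a separate word of explanation, both already in hand: the $126$ components of types (c) and (d) consist of exactly the $126$ twin pairs of Figure~\ref{fig:small}(b) — each such system has only a single $4$-cycle but two or three $12$-cycles, which is precisely the phenomenon described in the Remark of Section~\ref{sec:intro}, a Pasch configuration being complementary to as many as three $12$-cycles — and components (k) and (l) are subgraphs of the order-$16$ component of $\DSw[4](19)$ from Figure~\ref{fig:comp16}, with vertices inheriting the labels used there.

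Finally, Step 4 deals with the systems $V=\{v_1,\dots,v_m\}$ that survive Steps 1--3. Each $v_i$ is joined in $\Sw[12](19)$ to at least one vertex and lies in the giant component of $\Sw[4](19)$; choosing radii $k_i$, forming the closed neighbourhoods $\overline{N}_{k_i}(v_i)$ in $\Sw[12](19)$, and building the auxiliary graph $W$ on $V$ with $v_i\sim v_j$ whenever $\overline{N}_{k_i}(v_i)\cap\overline{N}_{k_j}(v_j)\neq\emptyset$, it suffices to check that $W$ is connected; if so, all surviving systems form one component and Step 5 is not needed. I expect this connectivity check to be the main obstacle: it is the most expensive part of the computation, requiring enumeration of $12$-cycles and their switches over large neighbourhoods, and $W$ may well fail to be connected for small uniform $k_i$, forcing enlarged radii for a few stubborn vertices, just as happened in the treatment of $\Sw[4](19)$ preceding Theorem~\ref{thm:c4}. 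Once $W$ is verified connected, the bookkeeping closes the argument: the component sizes $2{,}727+320+11{,}084{,}871{,}782=11{,}084{,}874{,}829$ sum to the total number of $\STS(19)$s, yielding the three bullet points of the theorem.
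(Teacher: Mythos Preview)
Your proposal is correct and follows essentially the same approach as the paper: invoke Lemma~\ref{lem:subg} to confine each $\Sw[12](19)$ component inside a component of $\Sw[4](19)$, run the algorithm of Section~\ref{sec:alg} to isolate the $2{,}727$ $12$-cycle-free systems and the $147$ small components (with the same explanations for types (c)/(d) via the Remark on Pasch configurations and for (k)/(l) via Figure~\ref{fig:comp16}), and then use Step~4 to certify that the survivors form a single component. The only discrepancy is your expectation that Step~4 would be the main obstacle requiring enlarged radii as in the $4$-cycle case; in fact the paper reports that Step~4 succeeded directly here, with no recourse to Step~5.
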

	
	\begin{figure}\centering\small
		\begin{tabular}{M{1cm}M{6cm}M{1.5cm}}
			\multicolumn{2}{c}{Component type} & Number \\
			\hline\hline
			& & \\
			
			(a) &
			\begin{tikzpicture}[x=0.3mm,y=0.3mm,thick,vertex/.style={circle,draw,minimum size=10,inner sep=0pt,fill=lightgray}]
				\node at (0,0) [vertex] (v1) {};
			\end{tikzpicture}
			& 2,727 \\
			
			(b) &
			\begin{tikzpicture}[x=0.3mm,y=0.3mm,thick,vertex/.style={circle,draw,minimum size=10,inner sep=0pt,fill=lightgray}]
				\node at (-40,0) [vertex] (v1) {};
				\node at (40,0) [vertex] (v2) {};
				\draw [<->] (v1) to (v2);
			\end{tikzpicture}
			& 6 \\
			
			(c) &
			\begin{tikzpicture}[x=0.3mm,y=0.3mm,thick,vertex/.style={circle,draw,minimum size=10,inner sep=0pt,fill=lightgray}]
				\node at (-40,0) [vertex] (v1) {};
				\node at (40,0) [vertex] (v2) {};
				\draw [<->,bend left=20] (v1) to (v2);
				\draw [<->,bend right=20] (v1) to (v2);
			\end{tikzpicture}
			& 42 \\
			
			(d) &
			\begin{tikzpicture}[x=0.3mm,y=0.3mm,thick,vertex/.style={circle,draw,minimum size=10,inner sep=0pt,fill=lightgray}]
				\node at (-40,0) [vertex] (v1) {};
				\node at (40,0) [vertex] (v2) {};
				\draw [<->] (v1) to (v2);
				\draw [<->,bend left=20] (v1) to (v2);
				\draw [<->,bend right=20] (v1) to (v2);
			\end{tikzpicture}
			& 84 \\
			
			(e) &
			\begin{tikzpicture}[x=0.3mm,y=0.3mm,thick,vertex/.style={circle,draw,minimum size=10,inner sep=0pt,fill=lightgray}]
				\node at (-40,0) [vertex] (v1) {};
				\node at (40,0) [vertex] (v2) {};
				\draw [<->] (v1) to (v2);
				\draw [<->,bend left=20] (v1) to (v2);
				\draw [<->,bend right=20] (v1) to (v2);
				\draw [->,loop above, min distance=0.9cm,in=125,out=55] (v1) to (v1);
				\draw [->,loop left, min distance=0.9cm,in=215,out=145] (v1) to (v1);
				\draw [->,loop below, min distance=0.9cm,in=305,out=235] (v1) to (v1);
			\end{tikzpicture}
			& 1 \\
			
			(f) &
			\begin{tikzpicture}[x=0.3mm,y=0.3mm,thick,vertex/.style={circle,draw,minimum size=10,inner sep=0pt,fill=lightgray}]
				\node at (-30,0) [vertex] (v1) {};
				\node at (30,0) [vertex] (v2) {};
				\node at (90,0) [vertex] (v3) {};
				\draw [<->,bend left=20] (v1) to (v2);
				\draw [<->,bend right=20] (v1) to (v2);
				\draw [<->,bend left=20] (v2) to (v3);
				\draw [<->,bend right=20] (v2) to (v3);
			\end{tikzpicture}
			& 1 \\
			
			(g) &
			\begin{tikzpicture}[x=0.3mm,y=0.3mm,thick,vertex/.style={circle,draw,minimum size=10,inner sep=0pt,fill=lightgray}]
				\node at (-30,0) [vertex] (v1) {};
				\node at (30,0) [vertex] (v2) {};
				\node at (90,0) [vertex] (v3) {};
				\draw [<->,bend left=20] (v1) to (v2);
				\draw [<->] (v1) to (v2);
				\draw [<->,bend right=20] (v1) to (v2);
				\draw [<->,bend left=20] (v2) to (v3);
				\draw [<->,bend right=20] (v2) to (v3);
			\end{tikzpicture}
			& 2 \\
			
			(h) &
			\begin{tikzpicture}[x=0.3mm,y=0.3mm,thick,vertex/.style={circle,draw,minimum size=10,inner sep=0pt,fill=lightgray}]
				\node at (-30,0) [vertex] (v1) {};
				\node at (30,0) [vertex] (v2) {};
				\node at (90,0) [vertex] (v3) {};
				\draw [<->,bend left=20] (v1) to (v2);
				\draw [<->] (v1) to (v2);
				\draw [<->,bend right=20] (v1) to (v2);
				\draw [<->,bend left=20] (v2) to (v3);
				\draw [<->] (v2) to (v3);
				\draw [<->,bend right=20] (v2) to (v3);
			\end{tikzpicture}
			& 7 \\
			
			(i) &
			\begin{tikzpicture}[x=0.3mm,y=0.3mm,thick,vertex/.style={circle,draw,minimum size=10,inner sep=0pt,fill=lightgray}]
				\node at (-40,0) [vertex] (v1) {};
				\node at (40,0) [vertex] (v2) {};
				\node at (0,-20) [vertex] (v3) {};
				\node at (0,20) [vertex] (v4) {};
				\draw [<->,bend left=20] (v1) to (v4);
				\draw [<->,bend left=20] (v2) to (v4);
				\draw [<->,bend left=20] (v1) to (v3);
				\draw [<->,bend left=20] (v2) to (v3);
				\draw [<->,bend left=20] (v4) to (v1);
				\draw [<->,bend left=20] (v4) to (v2);
				\draw [<->,bend left=20] (v3) to (v1);
				\draw [<->,bend left=20] (v3) to (v2);
			\end{tikzpicture}
			& 1 \\
			
			(j) &
			\begin{tikzpicture}[x=0.3mm,y=0.3mm,thick,vertex/.style={circle,draw,minimum size=10,inner sep=0pt,fill=lightgray}]
				\node at (-40,0) [vertex] (v1) {};
				\node at (40,0) [vertex] (v2) {};
				\node at (0,-20) [vertex] (v3) {};
				\node at (0,20) [vertex] (v4) {};
				\draw [<->,bend left=20] (v1) to (v4);
				\draw [<->] (v1) to (v4);
				\draw [<->,bend left=20] (v2) to (v4);
				\draw [<->] (v2) to (v4);
				\draw [<->,bend left=20] (v1) to (v3);
				\draw [<->] (v1) to (v3);
				\draw [<->,bend left=20] (v2) to (v3);
				\draw [<->] (v2) to (v3);
				\draw [<->,bend left=20] (v4) to (v1);
				\draw [<->,bend left=20] (v4) to (v2);
				\draw [<->,bend left=20] (v3) to (v1);
				\draw [<->,bend left=20] (v3) to (v2);
			\end{tikzpicture}
			& 1 \\
			
			(k) & \begin{tikzpicture}[x=0.1mm,y=0.1mm,thick,vertex/.style={circle,draw,minimum size=10,inner sep=0pt,fill=lightgray}]
	\footnotesize
	\node at (-120,-20) [vertex,label=below:{$B3$}] (v1) {};
	\node at (-240,-20) [vertex,label=below:{$A3$}] (v2) {};
	\node at (0,-20) [vertex,label=below:{$B1$}] (v3) {};
	\node at (-120,100) [vertex,label=above:{$B0$}] (v4) {};
	\node at (-240,100) [vertex,label=above:{$A0$}] (v5) {};
	\node at (120,-20) [vertex,label=below:{$A1$}] (v6) {};
	\node at (0,100) [vertex,label=above:{$B2$}] (v7) {};
	\node at (120,100) [vertex,label=above:{$A2$}] (v8) {};
	\draw [<->,bend left=10] (v1) to (v2);
	\draw [<->,bend right=10] (v1) to (v2);
	\draw [<->,bend left=10] (v1) to (v3);
	\draw [<->,bend right=10] (v1) to (v3);
	\draw [<->,bend left=10] (v1) to (v4);
	\draw [<->,bend right=10] (v1) to (v4);
	\draw [<->,bend left=10] (v2) to (v5);
	\draw [<->,bend right=10] (v2) to (v5);
	\draw [<->,bend left=10] (v3) to (v6);
	\draw [<->,bend right=10] (v3) to (v6);
	\draw [<->,bend left=10] (v3) to (v7);
	\draw [<->,bend right=10] (v3) to (v7);
	\draw [<->,bend left=10] (v4) to (v5);
	\draw [<->,bend right=10] (v4) to (v5);
	\draw [<->,bend left=10] (v4) to (v7);
	\draw [<->,bend right=10] (v4) to (v7);
	\draw [<->,bend left=10] (v6) to (v8);
	\draw [<->,bend right=10] (v6) to (v8);
	\draw [<->,bend left=10] (v7) to (v8);
	\draw [<->,bend right=10] (v7) to (v8);
\end{tikzpicture} & 1 \\
			(l) & \begin{tikzpicture}[x=0.1mm,y=0.1mm,thick,vertex/.style={circle,draw,minimum size=10,inner sep =0pt,fill=lightgray}]
	\footnotesize
	\node at (-54.8,-132.3) [vertex,label=below:{$A5$}] (v1) {};
	\node at (-54.8,132.4) [vertex,label=above:{$B5$}] (v2) {};
	\node at (-132.3,-54.8) [vertex,label=left:{$A7$}] (v3) {};
	\node at (132.3,54.8) [vertex,label=right:{$A6$}] (v4) {};
	\node at (54.8,-132.3) [vertex,label=below:{$B4$}] (v5) {};
	\node at (-132.3,54.8) [vertex,label=left:{$B7$}] (v6) {};
	\node at (54.9,132.4) [vertex,label=above:{$A4$}] (v7) {};
	\node at (132.3,-54.9) [vertex,label=right:{$B6$}] (v8) {};
	\draw [<->] (v1) to (v2);
	\draw [<->,bend left=10] (v1) to (v3);
	\draw [<->,bend right=10] (v1) to (v3);
	\draw [<->,bend left=10] (v1) to (v4);
	\draw [<->,bend right=10] (v1) to (v4);
	\draw [<->] (v1) to (v5);
	\draw [<->,bend left=10] (v2) to (v6);
	\draw [<->,bend right=10] (v2) to (v6);
	\draw [<->] (v2) to (v7);
	\draw [<->] (v3) to (v6);
	\draw [<->,bend left=10] (v3) to (v7);
	\draw [<->,bend right=10] (v3) to (v7);
	\draw [<->] (v3) to (v8);
	\draw [<->] (v4) to (v6);
	\draw [<->,bend left=10] (v4) to (v7);
	\draw [<->,bend right=10] (v4) to (v7);
	\draw [<->] (v4) to (v8);
	\draw [<->] (v5) to (v7);
	\draw [<->,bend left=10] (v5) to (v8);
	\draw [<->,bend right=10] (v5) to (v8);
\end{tikzpicture} & 1 \\
		\end{tabular}
		
		\begin{tabular}{cl}
			Type & Example representative system \\
			\hline
			(a) & \texttt{2468acegibfegchdif6d9cehi7idhcgheicgdaigfabfdhighgfefhihi} \\
			(b) & \texttt{2468acegig5aebhfih7b8dcfiaec9ifidcfghdfiehhfdggbihigegihh} \\
			(c) & \texttt{2468acegic5igdhbf6dc9bghi8eifghfaicehfehigbhgdehifegdihfi} \\
			(d) & \texttt{2468acegifbcig9dha67iegdh8beghiicfdhghfeg9dcfhbhigheififi} \\
			(e) & \texttt{2468acegiefcdhaig578fcdhi9ibfghhg9deiagdifebhicfgeihhgihf} \\
			(f) & \texttt{2468acegi68eaigdh9gb7hcfifebdih7afheidchi9ifghgdhiigfehfg} \\
			(g) & \texttt{2468acegihfceabig9c7adfih8gefiddihgbefheibhdfgifecihgihgh} \\
			(h) & \texttt{2468acegi6dge9hif7gi9hcdfafehgi7ecbihdcfhibfdhgbhfeihiggi} \\
			(i) & \texttt{2468acegi59gafheid7cebfhi9hcfigdhgbfigihefcadiiefehgdighh} \\
			(j) & \texttt{2468acegiceh7gfbii7adbcfhg8aefhchfigde9ifbifghgdidhigehhi} \\
			(k) & \texttt{2468acegi6d8bhcfid5hiafge7cibghiahgefdfighgecdfehfbgiihhi} \\
			(l) & \texttt{2468acegicha89dfi9gbieahffhabig7f9eiddighgdcefihfihgehhgi} \\
		\end{tabular}
		\caption{Small connected components in $\DSw[12](19)$}
		\label{fig:c12}
	\end{figure}
	


\begin{thebibliography}{99}
		\bibitem{EIGHT} C. J. Colbourn, A. D. Forbes, M. J. Grannell, T. S. Griggs, P. Kaski, P. R. J. \"{O}sterg\r{a}rd, D. A. Pike and O. Pottonen,
		\emph{Properties of the Steiner triple systems of order 19},
		Electron. J. Combin. \textbf{17} (2010), \#R98.
		
		\bibitem{FGG1} A. D. Forbes, M. J. Grannell and T. S. Griggs,
		\emph{On colourings of Steiner triple systems},
		Discrete Math. \textbf{261} (2003), 255--276.
		
		\bibitem{FGG2} A. D. Forbes, M. J. Grannell and T. S. Griggs,
		\emph{Configurations and trades in Steiner triple systems},
		Austral. J. Combin. \textbf{29} (2004), 75--84.
		
		\bibitem{GGM}  M. J. Grannell, T. S. Griggs and J. P. Murphy,
		\emph{Switching cycles in Steiner triple systems},
		Util. Math. \textbf{56} (1999), 3--21.
		
		\bibitem{GGM2} M. J. Grannell, T. S. Griggs and J. P. Murphy,
		\emph{Twin Steiner triple systems},
		Discrete Math. \textbf{167} (1997), 341--352.
		
		\bibitem{KMO} P. Kaski, V. M\"{a}kinen and P. R. J. \"{O}sterg\r{a}rd,
		\emph{The cycle switching graph of the Steiner triple systems of order 19 is connected},
		Graphs Combin. \textbf{27} (2011), 539--546.
		
		\bibitem {KO} P. Kaski and P. R. J. \"{O}sterg\r{a}rd,
		\emph{The Steiner triple systems of order 19},
		Math. Comp. \textbf{73} (2004), 2075--2092. 
		
		\bibitem{KIRK} T. P. Kirkman, \emph{On a problem in combinations},
		Cambridge and Dublin Math. J. \textbf{2} (1847), 191--204.
		
		\bibitem{MPR} R. A. Mathon, K. T. Phelps and A. Rosa,
		\emph{Small Steiner triple systems and their properties},
		Ars Combin. \textbf{15} (1983), 3--110.
		
		\bibitem{nauty} B. D. McKay and A. Piperno,
		\emph{Practical graph isomorphism II},
		J. Symbolic Comput. \textbf{60} (2014), 94--112.
		
		\bibitem{STS19} O. Pottonen, 
		\emph{Classification of Steiner triple systems of order 19},
		\url{https://pottonen.kapsi.fi/sts19/}, accessed Oct. 2023.
		
	\end{thebibliography}
\end{document}